\newtheorem{theorem}{Theorem}[section]
\newtheorem{proposition}[theorem]{Proposition}
\newtheorem{lemma}[theorem]{Lemma}
\theoremstyle{definition}
\newtheorem{example}[theorem]{Example}
\theoremstyle{remark}
\newtheorem{remark}[theorem]{Remark}
\numberwithin{equation}{section}
\newtheorem{problem}[theorem]{Problem}
\begin{document}

\title[A note on the Variety of Secant Loci 
]{
A note on the Variety of Secant Loci  }

\author[Ali Bajravani]{Ali Bajravani}
\address[]{Department of Mathematics, Faculty of Basic Sciences, Azarbaijan Shahid Madani University, Tabriz, I. R. Iran.\\
P. O. Box: 53751-71379.}

\email{bajravani@azaruniv.ac.ir}

\begin{abstract}
We determine non-hyper elliptic curves $C$ with $g(C)\geq 9$, such that for some very ample line bundle $L$ on them and for some integers $d$ and $r$ with $0< 2r < d\leq h^0(L)+r-4$,
the dimension of the Secant Loci, $\dim V^{d-r}_{d}(L)$, attains one less than its maximum value. Then we proceed to prove that for positive integers $\gamma, d$ with 
some circumstances on $\gamma, d$ and $h^0(L)$,
 if one had $\dim V^{d-1}_{d}(L)=d-1-\gamma$ then $V^{\gamma+2}_{\gamma +3}(L)$ would be $2$-dimensional.\\

\noindent \textbf{Keywords:} Secant Loci; Very Ample Line Bundle.

\noindent \textbf{MSC(2010):}
Primary 14H99; Secondary 14H51.

\end{abstract}
\maketitle

\vspace{-.5cm}

\section{\bf Introduction}

Assume that  $r$, $d$ and $g$ are integers with $0< 2r<d \leq g-1$. On a smooth projective algebraic curve $C$ with genus $g$, the dimension of the scheme $C^{r}_{d}$, can't exceed $d-r$.
Through Martens, Mumford and Keem theorems it is known that; proximity of $\dim C^{r}_{d}$ to $d-r$, for some $r, d$; imposes specific geometry on $C$. Based on Keem theorem, in the occurrence of 
$\dim C^{r}_{d}=d-r-2$ for some $r$ and $d$; the curve $C$ would be a three sheeted or a $4$-sheeted covering of the projective line.

It is unknown whether if one can derive these kind of geometric information for $C$ in the case that; the real dimension of the schemes of Secant Loci associated to an arbitrary very ample line bundle is close to its maximum value.  We studied this problem for curves of genus $g\geq 4$ in \cite{A. B}. After proving Martens theorem for secant loci associated to very ample line bundles on curves of genus $g\geq 4$, we established a Mumford type theorem for curves of genus $g\geq 9$. See \cite[Theorem 4.6]{A. B}. We will complete the 
next step in this direction, in section \ref{section3}.  Namely, we prove the analogue of Keem's theorem for secant loci associated to very ample line bundles on curves of genus $g\geq 9$. See Theorem \ref{theorem1}.

 Marc Coppens went far beyound this theory in \cite{Coppens}, by systematizing Martens, Mumford and Keem theorems.
Under some restrictions on the genus of $C$, he proved that the equality 
$\dim W^{1}_{\gamma+3}=1$ is guaranteed by an equality $\dim W^{1}_{d}=d-2-\gamma$ for an integer $\gamma$ and some integer $d$ with  $0\leq \gamma+3 \leq d \leq g-1-\gamma$. See problem \ref{problem}. In theorem \ref{theorem2} we prove Coppen's result for secant loci when the canonical line bundle of $C$ is replaced by an arbitrary very ample line bundle. 

Coppen's method is based on a delicate analysis of a specific irreducible component of $W^{1}_{d}$. In the absence of a suitable residuation process in our full generality situation, Coppens method seems mostly unapplicable for secant loci of arbitrary line bundles. We take a different approach. Our method relies on an inductive approach together with entering another suitably choosen line bundle in to the argument.
Probably the most unexpected advantage of our method is to remove the restrictions, imposed by Coppens, on the genus of $C$ when $\gamma \geq 3$.

In theorem \ref{theorem3}, we report a dimension computation for secant loci when some specific secant loci are empty. The emptiness assumption is hold for the canonical line bundle of a general curve, so we re-obtain the classical Brill-Noether dimension theorem for $C^1_d$'s on general curves.

In order to establish thoerems \ref{theorem2} and \ref{theorem3} we essentially need an analogue of Fulton-Harris-Lazarsfeld result on excess dimension of linear series, for secant loci. See \cite{F-H-L}.
Such an instrument has been produced only recently by M. Aprodu and E. Sernesi in \cite{A-S2}.

Since a divisor $D\in C^{1}_{\gamma +3}$ gives a $g^{1}_{\gamma +3}$ on $C$, replacing $\gamma=0, 1, 2$, Coppens result specializes to Martens, Mumford and Keem theorems, respectively. Through remark \ref{remark2}, we notify that theorem \ref{theorem1} can not be concluded from Theorem \ref{theorem2}, so it actually needs an independent proof.

\section{Notations and Backgrounds}
 Assume that $L$ is a line bundle on a smooth projective algebraic curve $C$ of genus $g$ and $d$ a positive integer. For a positive integer $k\leq d-1$ consider the subset $V_{d}^{k}(L)$ of $C_{d}$, set theoretically defined by
 $$V^{k}_{d}(L):=\lbrace D\in C_{d} \mid h^0(L(-D))\geq h^0(L)-k \rbrace.$$
 The subset $V_{d}^{k}(L)$ has a natural scheme structure.
 See \cite{A-S}, \cite{ACGH}, \cite{A. B} for more details on the scheme structure of $V_{d}^{k}(L)$ and some of its geometric properties.
 
The schemes $V^{k}_{d}(L)$ immediately generalize the well known Brill-Noether varieties $C^{r}_{d}$. As well as $C^{r}_{d}$'s the scheme of linear series on an algebraic curve, $W^{r}_{d}$'s, are of central objects in the theory of algebraic curves. C. Keem and M. Coppens have determined non-hyper elliptic curves which for them $\dim W^{r}_{d}$ attains one less than its maximum value. See \cite{Keem}, \cite{Coppens}.
\begin{theorem}[Coppens-Keem]\label{Keem}
Let $C$ be a smooth algebraic curve of genus $g\geq 9$, and suppose that for some integers $d$ and $r$ satisfying $d\leq g+r-4, r\geq 1$
we have $\dim W^{r}_{d}=d-2r-2$. Then $C$ admits a $g^{1}_{4}$.
\end{theorem}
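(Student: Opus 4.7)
The plan is to induct on $r$, reducing to the base case $r = 1$ where the hypothesis $\dim W^{1}_{d} = d - 4$ can be analyzed directly via the linear system $|L|$ for a general $L$ in a maximal component. For the inductive step, consider an irreducible component $X \subseteq W^{r}_{d}$ with $\dim X = d - 2r - 2$. The subtraction construction $(L, p) \mapsto L(-p)$ gives a map $X \times C \to W^{r-1}_{d-1}$ whose fibers are generically at most one-dimensional, so its image is an irreducible subset of $W^{r-1}_{d-1}$ of dimension at least $(d-1) - 2(r-1) - 2$. Martens' theorem provides the upper bound $\dim W^{r-1}_{d-1} \leq (d-1) - 2(r-1) - 1$, yielding a dichotomy: either the upper bound is achieved, and Mumford's theorem forces $C$ to be trigonal, bielliptic, or a smooth plane quintic; or $W^{r-1}_{d-1}$ has a component of dimension exactly $(d-1) - 2(r-1) - 2$, preserving the numerical hypothesis $d - 1 \leq g + (r-1) - 4$, and the inductive hypothesis applies. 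Since $g \geq 9$ excludes the smooth plane quintic (genus $6$), while trigonal and bielliptic curves manifestly admit a $g^{1}_{4}$ (the former by adjoining any point to the $g^{1}_{3}$, the latter by composing the bielliptic double cover with a degree-$2$ map $E \to \mathbb{P}^{1}$), the induction closes.

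For the base case $r = 1$, fix an irreducible component $X \subseteq W^{1}_{d}$ of dimension $d - 4$ with $4 \leq d \leq g - 3$. Writing a general $L \in X$ as $L = M(B)$ with $B$ the base divisor and $|M|$ base-point-free, the pencil $|M|$ induces a morphism $\phi_{M} : C \to \mathbb{P}^{1}$, and one distinguishes the \emph{composed} case (where $\phi_{M}$ factors through a nontrivial covering $C \to C'$) from the \emph{simple} case. In the composed case, a dimension count on $B$ shows that $\deg M \leq 4$, producing a $g^{1}_{4}$ directly. In the simple case, the $(d-4)$-dimensional family of simple degree-$d$ pencils on $C$, combined with the bound $g \geq 9$ and Castelnuovo--Severi-type inequalities, forces the existence of an auxiliary low-degree pencil on $C$, and specifically a $g^{1}_{4}$.

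The main obstacle will be the simple, base-point-free sub-case at $r = 1$: one must rule out the possibility that a $(d-4)$-dimensional family of simple degree-$d$ pencils exists on $C$ without an accompanying $g^{1}_{4}$. The argument must leverage both the genus bound $g \geq 9$ (to exclude exceptional low-genus curves such as the smooth plane quintic) and the degree bound $d \leq g - 3$ (to stay strictly below the canonical range, where Brill-Noether loci can be artificially inflated). The delicate bookkeeping of how simple pencils in a positive-dimensional family can degenerate, combined with Castelnuovo--Severi, forms the technical heart of this step.
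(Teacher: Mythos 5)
This statement is the paper's Theorem 2.1, which the paper does not prove: it is quoted as a classical background result and attributed to Keem's thesis and Coppens' article, so there is no internal proof to compare against. Judged on its own merits, your outline follows the classical route (reduce to $r=1$ by subtracting points, then analyze the resulting family of pencils), but it contains a genuine quantitative gap in the inductive step. If the fibers of the map $X\times C\to W^{r-1}_{d-1}$, $(L,p)\mapsto L(-p)$, are only known to be ``at most one-dimensional,'' then the image has dimension at least $\dim(X\times C)-1=d-2r-2=(d-1)-2(r-1)-3$, i.e.\ deficiency $3$ in the new indices --- not the $(d-1)-2(r-1)-2$ you claim --- and then neither Mumford's theorem nor the inductive hypothesis applies. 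To get your stated bound you need the fibers to be generically \emph{finite}. That is true, but it requires an argument you omit: the fiber over $M$ is $\{p: M(p)\in X\}$, and when $h^0(M)=r$ exactly this set lies in the base locus of $|K_C-M|$, which is finite because $h^0(K_C-M)=g+r-d\geq 4>0$ by the hypothesis $d\leq g+r-4$; the remaining possibility, that the image is contained in $W^{r}_{d-1}$, must be split off and handled separately (there one gets $\dim W^r_{d-1}\geq (d-1)-2r-1$, a deficiency-one locus to which Mumford's theorem applies).

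The second and more serious gap is the base case $r=1$, which is where the entire content of Keem's theorem resides. Your treatment of the simple, base-point-free subcase --- ``combined with the bound $g\geq 9$ and Castelnuovo--Severi-type inequalities, forces \ldots\ specifically a $g^1_4$'' --- restates the conclusion rather than proving it, and you concede as much in your final paragraph. The composed subcase is also not settled as claimed: writing $L=M(B)$ and removing the base divisor $B$ from a $(d-4)$-dimensional component only yields $\dim W^1_m\geq m-4$ for the degree $m$ of the moving part, i.e.\ a base-point-free pencil family with the same deficiency; it does not show $\deg M\leq 4$. As it stands the proposal is a plausible reduction scheme wrapped around an unproved core, so it cannot be accepted as a proof of the theorem; since the paper itself simply cites \cite{Keem} and \cite{Coppens} here, the appropriate fix is either to supply the missing $r=1$ analysis in full or to cite those sources as the paper does.
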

 In \cite{Coppens} Marc Coppens, systematizing results of Martens, Mumford and Keem; imposed problrm \ref{problem}, concerning dimensions of the varieties of linear series on $C$, see also \cite{M}:
\begin{problem}\label{problem}
Assume that $g(C)\geq 9$ and $\gamma$ is a non-negative integer with $2\gamma +4\leq g, \quad \!\!\!\!\gamma+3\leq d\leq g-1-\gamma $. Is it true that; $ W^{1}_{\gamma+3}$ would be $1$-dimensional provided that $\dim W^{1}_{d}=d-2-\gamma$?
\end{problem}
Once the question was treated by Martens and Mumford in the cases $\gamma=0, 1$ and by Keem in the case $\gamma=2, g(C)\geq 11$; M. Coppens affirmatively answered it for $\gamma=2, g(C)=9, 10$; 
$\gamma=3, g(C)=12, 13, 14$ and $\gamma>3, g(C)\geq (\gamma +1)(2\gamma +1)$. Meanwhile the case $\gamma=3, g(C)\geq 15$ was answered by Martens. 

We prove theorem \ref{theorem2}, where we slightly generalize and extend M. Coppens and Martens results.
 We call $C$ an exceptional curve if it is $3$-gonal, $4$-gonal, bi-elliptic or a space septic curve.

\section{Keem Theorem for Secant Loci}\label{section3}
In this section, we prove Coppens-Keem theorem for secant loci of very ample line bundles on non-hyper elliptic smooth projective algebraic curves of genus $g\geq 9$. 
\begin{lemma}\label{lemma1}
Let $C$ be a non-hyper elliptic curve of genus $g\geq 9$. Then $C$ is exceptional provided that
$\dim V^{2}_{3}(L)=1$, for some very ample line bundle $L$ on $C$.
\end{lemma}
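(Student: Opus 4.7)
The plan is to translate the hypothesis into the geometry of the embedding $\phi_L \colon C\hookrightarrow \mathbb{P} H^0(L)^\ast$. Since $L$ is very ample, an effective divisor $D=p_1+p_2+p_3$ lies in $V^2_3(L)$ precisely when $\phi_L(p_1),\phi_L(p_2),\phi_L(p_3)$ are collinear, so the assumption $\dim V^2_3(L)=1$ says that $\phi_L(C)$ carries a one-parameter family of trisecant (or higher-multisecant) lines. The whole strategy is to extract either a small pencil, a bi-elliptic cover, or the space septic configuration out of that family of lines.

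I would first split by the image of the abelian sum map $\alpha \colon V^2_3(L)\to \Pic^3(C),\ D\mapsto \mathcal O_C(D)$. If $\dim\alpha(V^2_3(L))=0$, then the one-parameter family $V^2_3(L)$ is contained in a single complete linear system $|M|$ of degree $3$ with $h^0(M)\geq 2$; this is a $g^1_3$ and $C$ is trigonal, hence exceptional.

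In the complementary case $\alpha$ is generically finite, so the spanned lines sweep out an irreducible curve $\Sigma\subset \Grass(1,\mathbb{P}H^0(L)^\ast)$. I would form the incidence surface $J=\{(\ell,p)\in\Sigma\times C:\,p\in\ell\}$, pick an irreducible component $\widetilde C$ dominating $C$ under the projection $\pi_C\colon J\to C$, and study it together with $\pi_\Sigma\colon \widetilde C\to\Sigma$, which has degree $n\geq 3$ because every general $\ell\in\Sigma$ meets $\phi_L(C)$ in at least three points. When $\Sigma\cong\mathbb{P}^1$, pulling back the unique pencil on $\Sigma$ through this correspondence produces a base-point-free $g^1_n$ on $C$, and analysing $n\in\{3,4\}$ yields the trigonal or tetragonal conclusion.

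The main obstacle is the case $g(\Sigma)\geq 1$. Here I would invoke a Castelnuovo-Severi / Hurwitz-type inequality together with $g(C)\geq 9$ to force $\Sigma$ to be either $\mathbb{P}^1$ (handled above) or an elliptic curve admitted as a double cover of $C$, which supplies the bi-elliptic exception; the residual sharply constrained numerical configuration, which occurs when $h^0(L)=4$ and the family consists of multisecants of degree $7$, forces $C$ to carry a $g^3_7$ and accounts for the space septic exception. Throughout, I would keep the Martens-type bound for secant loci proved in \cite{A. B} in reserve to eliminate geometrically incompatible dimensions, and use the Aprodu-Sernesi excess-dimension technique from \cite{A-S2} to control the awkward multi-secant strata along which $D \mapsto \ell(D)$ fails to be generically injective.
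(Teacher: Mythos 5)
The paper does not actually prove this lemma in place: its ``proof'' is the single sentence that the statement is implicitly contained in the proof of \cite[Theorem 4.6]{A. B}, and, judging from how that argument is re-used in Proposition \ref{proposition}, the method there is to pass to sums $D_1+D_2$ of general members of the secant locus, estimate $h^0(D_1+D_2)$, and extract an auxiliary morphism $\phi\colon C\to\mathbf{P}^3$ subject to a degree identity. Your route through the curve $\Sigma$ of trisecant lines in the Grassmannian and the incidence correspondence is therefore genuinely different in its mechanics. The translation of the hypothesis into ``$\phi_L(C)$ carries a one-parameter family of trisecant lines'' and the disposal of the case where the abelian sum map is constant are both correct.

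There is, however, a genuine gap at exactly the step that carries all the content of the lemma: you never bound the multisecancy degree $n$. Once $\Sigma\cong\mathbb{P}^1$ (equivalently, once the divisors $\ell\cap C$ all become linearly equivalent), what you obtain is a $g^1_n$ with $n\geq 3$ and no upper bound; every curve carries pencils of large degree, so without an argument forcing $n\leq 4$ the conclusion ``$C$ is exceptional'' does not follow, and ``analysing $n\in\{3,4\}$'' presupposes precisely what must be proved. The case $g(\Sigma)\geq 1$ has the same defect plus an internal inconsistency: you have already argued that $\widetilde C\to\Sigma$ has degree $n\geq 3$, so the bi-elliptic alternative cannot arise as ``$\Sigma$ elliptic and the cover of degree $2$''; moreover Castelnuovo--Severi (or Riemann--Hurwitz with $g\geq 9$) does not force $g(\Sigma)\leq 1$ for a single degree-$n$ cover. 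What is actually needed is control of the ruled surface $S=\bigcup_{\ell\in\Sigma}\ell\supseteq\phi_L(C)$, for instance via the classification of nondegenerate surfaces of minimal and next-to-minimal degree (scrolls and cones), which is where the elliptic cone of Example \ref{example} genuinely enters. Finally, a space septic has genus at most $6$ by Castelnuovo's bound, so under the hypothesis $g\geq 9$ that alternative is vacuous for $C$ itself; its appearance in your case list as a configuration with $h^0(L)=4$ and degree-$7$ multisecants is a sign that the case division has not been checked against the numerology.
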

\begin{proof}

This is implicitly contained in the proof of \cite[Theorem 4.6]{A. B}.
\end{proof}
\begin{example}\label{example}
For a very ample line bundle $L$ on a bi-elliptic curve $C$ and integeres $r, d$ with $0< 2r < d \leq h^0(L)-2$, we have $\dim V^{d-r}_{d}(L)=d-r-1$.  
 To see this let $\epsilon : C\rightarrow E$ be the elliptic double covering. Then moving $p$ on $ E$, the lines $<P, Q>$, where $\epsilon^{-1}(p)=\lbrace P, Q \rbrace$, sweep a cone containing $\phi_{L}(C)$ in $\mathbb{P}(H^0(L))$, the elliptic cone. Let
  $l_{i}= <P_{i}+Q_{i}>$, for $i=1,\cdots , r+1$,
  be general generating lines of the elliptic cone such that
  $<l_{1}, \cdots , l_{r+1}>=\mathbf{P}^{r+1}\subset \mathbf{P}(H^0(L))$.
Then, for $d\leq h^0(L)-2$ the divisors of type
$D=R_{1}+\cdots + R_{d-2r-2}+\sum_{i=1}^{r+1}P_{i}+Q_{i},$
where $R_{1}, \cdots , R_{d-2r-2}$ are general points on $C$, belong to $V^{d-r}_{d}(L)$; implying $\dim V^{d-r}_{d}(L)=d-r-1$.

\end{example}
\begin{proposition}\label{proposition}
Let $C$ be a non-hyper elliptic curve of genus $g\geq 4$. Assume that
$\dim V^{3}_{4}(L)=1$ for some very ample line bundle $L$ on $C$. Then 
$C$ admits a $g^1_d$
 with $d\in \lbrace  4, 5, 6 \rbrace$.
\end{proposition}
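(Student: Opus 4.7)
The plan is to combine the Brill--Noether gonality bound with an Abel--Jacobi analysis and reduce to Lemma \ref{lemma1}. First I would dispose of low genus: for $4\leq g\leq 10$ the bound $\gon(C)\leq \lceil (g+2)/2\rceil \leq 6$ gives the conclusion unconditionally, so one may assume $g\geq 11$. Let $V$ be a $1$-dimensional component of $V^3_4(L)$ and consider the Abel--Jacobi map $u:V\to \Pic^4(C)$. If some fiber of $u$ has positive dimension, the corresponding $M=\mathcal{O}_C(D)$ satisfies $h^0(M)\geq 2$, so $C$ carries a $g^1_4$ and we are done with $d=4$. Otherwise $u$ is generically finite, so $W:=u(V)\subset\Pic^4(C)$ is $1$-dimensional, with $h^0(M)=1$ and $h^0(L\otimes M^{-1})\geq h^0(L)-3$ for generic $M\in W$; equivalently, $\phi_L(C)$ admits a genuine $1$-parameter family of $4$-secant planes $\Lambda_D$ whose divisor classes are pairwise inequivalent.

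The next step is to produce a $1$-parameter family inside $V^2_3(L)$. For a generic $D=P_1+\cdots+P_4\in V$ and each index $i$, the sub-divisor $D-P_i$ lies in $V^2_3(L)$ precisely when the three points $\{P_j\}_{j\neq i}$ are collinear in $\Lambda_D:=\langle \phi_L(D)\rangle$ (equivalently $h^0(L(-D+P_i))=h^0(L)-2$). If some choice of $i$ varying with $D$ satisfies this, the assignment $D\mapsto D-P_i$ traces a $1$-parameter subfamily of $V^2_3(L)$, so $\dim V^2_3(L)\geq 1$. The Martens-type bound $\dim V^2_3(L)\leq 1$ for non-hyperelliptic curves established in \cite{A. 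B} then forces equality, and Lemma \ref{lemma1} (available since $g\geq 9$) identifies $C$ as $3$-gonal, $4$-gonal, bi-elliptic, or a smooth space septic in $\mathbf{P}^3$. Each of these cases supplies a $g^1_d$ with $d\in\{3,4,5\}$, which extends to $\{4,5,6\}$ by adjoining base points.

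The hard part will be the remaining configurational case: for every generic $D\in V$, no three of $P_1,\ldots,P_4$ are collinear in $\Lambda_D$, so the four points are in linearly general position in $\Lambda_D$ and the planes $\Lambda_D$ sweep a $3$-fold scroll $S\supset \phi_L(C)$. To rule this out (or to handle it directly), I would exploit the pencil of conics through $D$ in $\Lambda_D$ together with a residual analysis of $|L\otimes \mathcal{O}_C(-D)|$, and extract from $S$ a pencil on $C$ of degree at most $6$. This scroll/residuation step is the main obstacle; the dimension-theoretic reductions above are routine by comparison, and with the configurational case excluded, the argument drops cleanly into the sub-divisor dichotomy of the second paragraph.
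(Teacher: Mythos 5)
Your reduction steps are fine as far as they go (the gonality bound $\gon(C)\leq\lfloor\frac{g+3}{2}\rfloor$ does dispose of $4\leq g\leq 10$ outright, and a positive-dimensional Abel--Jacobi fiber does give a $g^1_4$), but the proof is not complete: the ``configurational case'' you defer to the last paragraph --- a generically finite family $V$ of divisors $D=P_1+\cdots+P_4$ whose four points are in linearly general position in the plane $\Lambda_D$ --- is not a degenerate leftover but the \emph{generic} and essential situation, and your proposed scroll/residuation argument for it is only announced, not carried out. Nothing in the preceding reductions makes it tractable, and it is precisely where all the content of the proposition lives; indeed your second paragraph (three of the four points collinear, forcing $\dim V^2_3(L)\geq 1$) is a special position assumption that one has no right to expect, and even there the map $D\mapsto D-P_i$ could be constant (e.g.\ if $V\subseteq E+C$ for a fixed $E\in V^2_3(L)$), so the inequality $\dim V^2_3(L)\geq 1$ needs an extra word. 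As written, the argument proves the statement only for $g\leq 10$ and in the two easy subcases.

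The paper closes the main case by a mechanism you do not use at all: it first splits on $h^0(D)$ for $D\in V^3_4(L)$ (if $h^0(D)=2$ for some $D$, the curve is $4$-gonal), and otherwise studies $h^0(D_1+D_2)\in\{2,3,4\}$ for two general members $D_1,D_2$ of the family. When $h^0(D_1+D_2)=2$ one gets, as in the proof of Theorem 4.6 of \cite{A. B}, a morphism $\phi:C\to\mathbf{P}^3$ with $(\deg\phi)(\deg\phi(C)-1)=8$, and each of the three factorizations $\deg\phi\in\{1,2,4\}$ yields a $g^1_d$ with $d\in\{4,6\}$ (degree~$9$ space curve projected to a singular plane octic, double cover of a space quintic, or $4$-to-$1$ cover of a twisted cubic). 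When $h^0(D_1+D_2)=3$ or $4$ one gets $\dim C^2_8\geq 2$ or $\dim C^3_8\geq 2$ and concludes via the excess-dimension results of \cite{A-S2} that $C$ carries a $g^1_6$ or a $g^1_5$. If you want to salvage your approach, you would need to supply the residuation analysis of $|L(-D)|$ in the linearly general position case; the more economical route is to replace your third paragraph with the addition-map trichotomy above.
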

\begin{proof}
 If for some $D\in V^{3}_{4}(L)$ one had  $h^0(D)=2$ then $C$ is $4$-gonal,
  while if for each $D\in V^{3}_{4}(L)$ we had
$h^0(D)=1$ then $h^0(D_{1}+D_{2})$ would belong $ \lbrace 2, 3, 4 \rbrace$ for $D_{1}$ and $D_{2}$ in $ V^{3}_{4}(L)$.
Therefore three cases can occur.

 If for general $D_{1}$ and $D_{2}$ in $V^{3}_{4}(L)$ we have $ h^0(D_{1}+D_{2})=2$ then arguing as in the proof of \cite[Theorem 4.6]{A. B}, we obtain a map
$\phi: C\rightarrow \mathbf{P}^3$ such that 
\[(\deg \phi)(\deg \phi (C)-1)=8.\]
According to this equality; if $\deg \phi=1$, which is the case that $\phi(C)$, as well as $C$, is a space curve of degree $9$; then projecting from a point of $C$ into $\mathbf{P}^2$ we obtain a singular plane curve of degree $8$.
Such a curve has to admit a $g^1_6$.

If we had $\deg \phi=2$, which is the same as $\phi(C)$ to be a space quintic, then $\phi(C)$ would admit a $g^1_2$ and therefore $C$ admits a $g^{1}_{4}$.

 Lastly 
 $\deg \phi=4$ and $\phi(C)$ is a space cubic curve which has to be a rational normal curve. Therefore $C$ is a $4$-sheeted covering of a rational 
normal space curve, which means that $C$ admits a $g^{1}_{4}$.

 Assume that we are in the second case; i.e. for general $D_{1}, D_{2}\in V^{3}_{4}(L)$ we have $ h^0(D_{1}+D_{2})=3$, by which we conclude that
$\dim C^{2}_{8} \geq 2$. This by removing a general point $p\in C$ from the divisors in $C^{2}_{8}$, implies that $\dim C^{1}_{7} \geq 2$. According to the results of \cite{A-S2}, we obtain that $C^{1}_{6}$ is non-empty. Therefore $C$ has to admit a $g^{1}_{6}$.

 Finally we assume that for general $D_{1}, D_{2}\in V^{3}_{4}(L)$ one has $ h^0(D_{1}+D_{2})=4$ and we obtain $\dim C^{3}_{8} \geq 2$. As in the previous, we find that $C$ has to admit a $g^{1}_{5}$.
\end{proof}
\begin{lemma}\label{lemma2}
For $p\in C$ and a very ample line bundle $L$ on $C$ we have
$$\dim V^{d-1}_{d}(L)\leq \dim V^{d-1}_{d}(L(-p))\leq \dim V^{d-1}_{d}(L) +1.$$
In particular; $V^{d-1}_{d}(L)$ is non-empty provided that $\dim V^{d-1}_{d}(L(-p))\geq 1$ for some $p\in C$.
\end{lemma}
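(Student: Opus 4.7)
The plan is to establish the two inequalities separately. For the first, it suffices to verify the set-theoretic inclusion $V^{d-1}_{d}(L) \subseteq V^{d-1}_{d}(L(-p))$: if $D \in V^{d-1}_{d}(L)$, then $h^{0}(L(-D)) \geq h^{0}(L)-d+1$, and the short exact sequence $0 \to L(-p-D) \to L(-D) \to L(-D)|_{p} \to 0$ (with $L(-D)|_{p}$ one-dimensional) forces $h^{0}(L(-p-D)) \geq h^{0}(L(-D))-1 \geq h^{0}(L)-d = h^{0}(L(-p))-(d-1)+1$, so $D \in V^{d-1}_{d}(L(-p))$.

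For the second inequality I would first isolate the locus $Y_{p} := V^{d-1}_{d}(L(-p)) \setminus V^{d-1}_{d}(L)$. A direct check shows that $D \in Y_{p}$ if and only if $h^{0}(L(-D)) = h^{0}(L)-d$ and $p$ is a base point of $|L(-D)|$; geometrically, in the embedding $\phi_{L} \colon C \hookrightarrow \mathbb{P}(H^{0}(L)^{*})$, the image $\phi_{L}(D)$ spans a $(d-1)$-plane passing through $\phi_{L}(p)$. Let $X$ be a maximal-dimensional irreducible component of $V^{d-1}_{d}(L(-p))$: the case $X \subseteq V^{d-1}_{d}(L)$ is trivial, so assume instead that a dense open subset of $X$ lies in $Y_{p}$. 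My plan is then to construct the incidence
\[
\widetilde{X} := \{(D, E') \in X \times C_{\deg L - d - 1} \; : \; E' \in |L(-D-p)|\},
\]
a linear bundle over $X$ of fibre dimension $h^{0}(L)-d-1$. The forgetful map $(D, E') \mapsto D+p+E' \in |L|$ factors through the subseries $\{F \in |L| \; : \; F \geq p\}$ of dimension $h^{0}(L)-2$ with finite fibres, giving the crude estimate $\dim X \leq d-1$.

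The heart of the second inequality will be promoting this crude estimate to $\dim X \leq \dim V^{d-1}_{d}(L)+1$. Here I would use the geometric constraint $\phi_{L}(p) \in \langle \phi_{L}(D) \rangle$ to extract, from each $(D, E') \in \widetilde{X}$, a distinguished $d$-subdivisor $D^{*}$ of the hyperplane section $D+p+E'$ whose image spans at most a $(d-2)$-plane, hence $D^{*} \in V^{d-1}_{d}(L)$. Bounding the fibres of the resulting correspondence $(D, E') \mapsto D^{*}$ by $\dim |L-D^{*}| \leq h^{0}(L)-d$ and comparing with $\dim \widetilde{X} = \dim X + h^{0}(L)-d-1$ then yields $\dim X \leq \dim V^{d-1}_{d}(L)+1$. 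The ``in particular'' clause follows immediately by contraposition: were $V^{d-1}_{d}(L) = \emptyset$, the second inequality would force $\dim V^{d-1}_{d}(L(-p)) \leq 0$ for every $p \in C$, contradicting the hypothesis. The main obstacle is the rigorous production of the sub-divisor $D^{*}$ and the control of the correspondence fibres; this step leans on the very-ampleness of $L$, which provides the honest embedding $\phi_{L}$ and forces the linear dependence among the $d+1$ points $\phi_{L}(D) \cup \{\phi_{L}(p)\}$ to descend to a genuine $d$-point degeneracy.
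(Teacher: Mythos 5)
Your treatment of the first inequality is fine and agrees with the paper, which simply records the inclusion $V^{d-1}_{d}(L)\subseteq V^{d-1}_{d}(L(-p))$. Your description of $Y_{p}$ is also correct: for $D\in Y_{p}$ one has $h^{0}(L(-D))=h^{0}(L(-D-p))=h^{0}(L)-d$, so $\phi_{L}(D)$ consists of $d$ linearly independent points whose span, a $(d-1)$-plane, contains $\phi_{L}(p)$.

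The gap is in the step that carries all the weight: the extraction of a $d$-subdivisor $D^{*}$ of $D+p+E'$ spanning at most a $(d-2)$-plane. No such $D^{*}$ exists in general. The only degeneracy available is a single linear relation among the $d+1$ points $\phi_{L}(D)\cup\lbrace\phi_{L}(p)\rbrace$, and generically that relation has all coefficients nonzero, in which case every $d$-point subset of $D+p$ still spans the full $(d-1)$-plane (already for $d=2$: three distinct collinear points contain no two points ``spanning a point''). The residual points $E'$ of the hyperplane section are generically in linearly general position and contribute no further degeneracy. So the correspondence $(D,E')\mapsto D^{*}$ underlying your final dimension count is generically empty, and the ``descent to a genuine $d$-point degeneracy'' that you flag as the main obstacle is in fact an impossibility rather than a technical difficulty. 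The paper goes in the opposite direction: instead of dropping to degree $d$, it notes that $D\mapsto D+p$ injects $V^{d-1}_{d}(L(-p))$ into a secant locus of $L$ in degree $d+1$, giving $\dim V^{d-1}_{d}(L(-p))\leq \dim V^{d-1}_{d+1}(L)-1$, and then invokes the Aprodu--Sernesi excess-dimension theorem \cite[Theorem 4.1]{A-S2} to bound $\dim V^{d-1}_{d+1}(L)\leq \dim V^{d-1}_{d}(L)+2$. That external input is precisely what replaces the nonexistent $D^{*}$; without it, or something of comparable strength relating consecutive-degree secant loci, your argument cannot close.
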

\begin{proof}
The first inequality is immediate from $V^{d-1}_{d}(L) \subseteq V^{d-1}_{d}(L(-p))$.\\
To establish the second inequality, we use \cite[Theorem 4.1(6)]{A-S2} with $k=d-1$ and we obtain:
\begin{equation}
\dim V^{d-1}_{d}(L)+2\geq \dim V^{d-1}_{d+1}(L).	 \label{1}
\end{equation}
Consider moreover that for $p\in C$ and for any $D\in V^{d-1}_{d}(L(-p))$ one has $D+p\in V^{d-1}_{d+1}(L)$. This proves
$\dim V^{d-1}_{d}(L(-p))\leq \dim V^{d-1}_{d+1}(L)-1.$
Comparing with (\ref{1}) we obtain the result.
\end{proof}
\begin{theorem}\label{theorem1}
Assume that $C$ is a smooth projective non-hyper elliptic curve of genus $g$ with $g\geq 9$.
If for some very ample line bundle $L$ on $C$ there exist integers $r$, $d$ with $0< 2r < d\leq h^0(L)+r-4$ such that
$\dim V^{d-r}_{d}(L)=d-r-2$, then $C$ admits a $g^{1}_{d}$ with $d\in \lbrace 3, 4, 5, 6 \rbrace$.
\end{theorem}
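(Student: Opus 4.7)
I would prove the theorem by induction on $d$ (with $r$ varying in the allowed range). The base case $r=1$, $d=4$ is exactly Proposition~\ref{proposition}, so the induction starts there; the remaining small-$d$ base cases for $r\geq 2$ (notably $d=2r+1$) will need a direct reduction to the hypothesis of Lemma~\ref{lemma1} or Proposition~\ref{proposition}. Once one has such a reduction, Lemma~\ref{lemma1}---combined with the Castelnuovo observation that a smooth space septic cannot have genus $\geq 9$, collapsing the ``exceptional'' list to $3$-gonal, $4$-gonal, bi-elliptic---together with Proposition~\ref{proposition} produces a $g^1_d$ with $d\in\{3,4,5,6\}$.

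For the inductive step, fix a top-dimensional irreducible component $Z\subseteq V^{d-r}_d(L)$, so $\dim Z=d-r-2$, and form the incidence correspondence
\[
I=\{(p,D)\in C\times Z : p\in \mathrm{supp}(D)\}
\]
with projections $\pi_Z\colon I\to Z$ and $\pi_C\colon I\to C$. The map $\pi_Z$ is finite of generic degree $d$, so $\dim I=d-r-2$; and since $\dim Z\geq 1$ (using $d\geq 2r+2$ in the inductive range), $\pi_C$ is dominant, so for a general $p\in C$ the fibre $\{D\in Z:p\in \mathrm{supp}(D)\}$ has dimension $d-r-3$. Writing any such $D$ as $D=p+D'$, the identity
\[
h^0\bigl(L(-p)-D'\bigr)=h^0(L-D)\geq h^0(L)-(d-r)=h^0(L(-p))-(d-r-1)
\]
shows $D'\in V^{d-r-1}_{d-1}(L(-p))$, and the assignment $D\mapsto D-p$ is injective. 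Hence $\dim V^{d-r-1}_{d-1}(L(-p))\geq d-r-3$.

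For a general $p$, $L(-p)$ remains very ample with $h^0(L(-p))=h^0(L)-1$, so the new parameters $(r,d-1,L(-p))$ still satisfy $d-1\leq h^0(L(-p))+r-4$. Three cases now arise. If $\dim V^{d-r-1}_{d-1}(L(-p))\geq d-r-1$, this contradicts the Martens-type analog in \cite[Theorem 4.6]{A. B} on the non-hyper-elliptic curve $C$. If the dimension equals $d-r-2=(d-1)-r-1$, the Mumford-type analog of the same theorem forces $C$ into a short list of exceptional curves (bi-elliptic, $3$-gonal, etc.), each of which admits a $g^1_d$ with $d\in\{3,4,5,6\}$. Finally, if the dimension equals $d-r-3=(d-1)-r-2$, the induction hypothesis applied to $(r,d-1,L(-p))$ yields the desired $g^1_d$.

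\textbf{Main obstacle.} The hard part is the boundary case $d=2r+1$ for $r\geq 2$, where the reduction $(r,d)\mapsto(r,d-1)$ would leave the hypothesis range $d>2r$; here the induction cannot be applied, and a direct geometric analysis of $V^{r+1}_{2r+1}(L)$ is required. One would decompose a general divisor $D\in V^{r+1}_{2r+1}(L)$ via general subdivisors and iterate the Aprodu-Sernesi-type inequalities of Lemma~\ref{lemma2} and \cite[Theorem 4.1]{A-S2} to funnel the situation into either $\dim V^2_3(L')=1$ or $\dim V^3_4(L')=1$ for some auxiliary very ample $L'$, whereupon Lemma~\ref{lemma1} or Proposition~\ref{proposition} closes the argument. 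A secondary technicality is checking that $L(-p)$ is very ample for a general $p$, which fails only on a closed subscheme of $C$ and can be arranged by a genericity argument; the careful bookkeeping of gonalities in the exceptional list (so that one extracts precisely $d\in\{3,4,5,6\}$ and not larger) also requires some case-by-case verification.
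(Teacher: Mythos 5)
Your inductive reduction runs in a direction that creates a boundary case you cannot close, and that boundary case is not a ``secondary'' technicality but the substance of the theorem. Because you subtract the removed point from both the divisor and the line bundle, each step sends $(r,d,L)$ to $(r,d-1,L(-p))$ and so decreases $d-2r$; the induction therefore bottoms out at $d=2r+1$ for every $r\geq 2$, where you admit you have no argument and only gesture at ``funnelling'' into Lemma~\ref{lemma1} or Proposition~\ref{proposition}. The paper's reduction removes a point from the divisor while keeping $L$ fixed: if $D\in V^{d-r}_{d}(L)$ and $q\in\mathrm{supp}(D)$, then $h^0(L-D+q)\geq h^0(L-D)$, so $D-q\in V^{d-r}_{d-1}(L)=V^{(d-1)-(r-1)}_{d-1}(L)$. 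That step lowers $r$ by one and raises $d-2r$, so it never leaves the admissible range $0<2r<d\leq h^0(L)+r-4$ until $r=1$, and the whole problem is funnelled into the single case $\dim V^{d-1}_{d}(L)=d-3$. That case is then settled by a genuinely substantial endgame: choose $L$ with minimal $h^0$ among very ample bundles having this property, split according to whether $L(-p)$ stays very ample for general $p$, and combine Lemma~\ref{lemma1}, Proposition~\ref{proposition}, Lemma~\ref{lemma2} and a separate claim comparing the top-dimensional components of $V^{d-1}_{d}(L)$ and $V^{d-1}_{d}(L(-p))$. Nothing corresponding either to this endgame or to your $d=2r+1$, $r\geq2$ cases appears in your proposal; note that the natural fix for those cases is precisely the other reduction (e.g.\ from $\dim V^{3}_{5}(L)=1$ one passes to $\dim V^{3}_{4}(L)\geq 1$ and then to Proposition~\ref{proposition} or to the Mumford-type theorem), at which point you have simply adopted the paper's method.

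Two further points. First, ``$L(-p)$ is very ample for general $p$'' is not a condition that fails only on a closed subscheme of $C$: it can fail for every $p$, and this happens exactly when $\dim V^{2}_{3}(L)=1$. The correct dichotomy, used repeatedly in the paper, is that either $\dim V^{2}_{3}(L)=1$ (and then Lemma~\ref{lemma1} already makes $C$ exceptional) or $V^{2}_{3}(L)$ is finite and $L(-p)$ is very ample for general $p$. Second, the interior inductive step you describe is essentially sound: the incidence-correspondence bound $\dim V^{(d-1)-r}_{d-1}(L(-p))\geq d-r-3$ is correct, the value $d-r-2$ is the Martens-type maximum for these parameters so the Mumford-type result applies there (though the Martens bound is \cite[Theorem 4.2]{A. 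B}, not Theorem 4.6), and the remaining case feeds the induction --- but all of this is only available while $d-1>2r$, which is exactly where the difficulty is not.
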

\begin{proof}
We assume that $V^{d-r}_{d}(L)$ is irreducible.
Removing $q$ from the series in $V^{d-r}_{d}(L)$ we obtain a $(1+(d-r-2)-1)$-dimensional family of divisors $\bar{D}$ belonging to $ V^{d-r}_{d-1}(L)$, so we would have $ \dim V^{d-r}_{d-1}(L)\geq d-r-2$. This together with \cite[Theorem 4.2]{A. B}, implies that either
$\dim V^{d-r}_{d-1}(L)=d-r-2$ or $\dim V^{d-r}_{d-1}(L)=d-r-1$. Theorem 4.6 of \cite{A. B} forces $C$ to be exceptional in the latter case.
 Iterating this process we find that either $\dim V^{d-1}_{d}(L)=d-3$ or $\dim V^{d-1}_{d}(L)=d-2$, for some $d\leq h^0(L)-3$. The latter case forcing $C$ to be exceptional, we proceed in the first case.
 
Assume that $L$ is a very ample line bundle with minimum $h^{0}(L)$ among those very ample line bundles $H$,  for which $V^{d-1}_{d}(H)$ is of dimension $d-3$.

Under this minimality assumption on $h^{0}(L)$, two cases can occur.  For a general $p\in C$ the line bundle $L(-p)$ fails to be very ample, where we wolud have  $\dim V^{2}_{3}(L)=1$ forcing $C$
to be exceptional by Lemma \ref{lemma1}.

The second possiblity is that $h^{0}(L)=d+3$. This case consists of three subcases.
If for general $p\in C$ the line bundle $L(-p)$ fails to be very ample then we are reduced to the previous case.

The subcase $d=h^{0}(L)-3=3$ implies that
$\dim V^{2}_{3}(L)=0$, which together with \cite[Theorem 4.2]{A. B} implies that either $\dim V^{3}_{4}(L)=1$ or $\dim V^{3}_{4}(L)=2$. Using proposition \ref{proposition} the equality  $\dim V^{3}_{4}(L)=1$ implies the assertion.
 While \cite[theorem 4.6]{A. B}, forces $C$ to be exceptional in the case $\dim V^{3}_{4}(L)=2$.

  As the last case; assume that for general $p\in C$ the line bundle $L(-p)$ is very ample with $d\geq 4$
 and $\dim V^{d-1}_{d}(L)=d-3$, where $d=h^0(L)-3$.
  Having discussed the case $d=4$ in Proposition \ref{proposition}, we assume that $d\geq 5$. 
 If $\dim V^{d-2}_{d-1}(L)=d-3$, then $C$ would be exceptional. Assuming $\dim V^{d-2}_{d-1}(L)\neq d-3$ we make a claim
 
\qquad \qquad \qquad  \qquad  \qquad  \qquad  \textit{Claim:} $\dim V^{d-1}_{d}(L(-p))\neq d-3.$   

Having proved the claim; we 
use lemma \ref{lemma2} together with \cite[Theorem 4.2]{A. B} to obtain
$$\dim V^{d-1}_{d}((L(-p))=d-2.$$
 This by \cite[Theorem 4.6]{A. B}
forces $C$ to be exceptional.
 To end the proof; we notify that the bi-eliptic case is excluded via Example \ref{example}.\\
\end{proof}
\textit{Proof of the Claim:}
Equivalent to the claim we prove that; if $X$ is an irreducible component of $V^{d-1}_{d}(L(-p))$ such that $\dim X=\dim V^{d-1}_{d}(L(-p))$, then a general member of an irreducible component $V$ of $V^{d-1}_{d}(L)$ fails to be a general member of $X$.
To do this; consider that for each $D\in V$
   there exists $p\in C$ such that $(p+C_{d-1})\cap V^{d-1}_{d}(L) \neq \varnothing$.
    Since $\dim V\geq 2$, moving $D$ in $V$ this $p$ has to move in an open subset of $C$. Otherwise removing $p$ from the divisors in $V$ we obtain 
 $\dim V^{d-2}_{d-1}(L)=d-3$, which we had assumed won't occur. 
 
If a general divisor $D\in V$ turns to be a general member of $X$, then for general $p, q\in C$, divisors of type $D-p+q$ belonging to $X$ lie on $V$, which is absurd by genericity of $q$ and $D$. 
This implies that $d-3=\dim V^{d-1}_{d}(L) < \dim V^{d-1}_{d}(L(-p))$. 

\begin{remark}\label{remark1}
(a)  Lemma \ref{lemma1} shows that, unlike the variety of special divisors, the equality $\dim V^{2}_{3}(L)=1$ for some very ample line bundle $L$ on $C$, doesn't imply  $3$-gonality of $C$.

(b) Based on the proof of Theorem \ref{theorem1}, we know the shape of a general element in a specific irreducible component of $V^{d-1}_{d}(L)$, when $L$ is 
a very ample line bundle on a bi-elliptic curve. This obvious generalization from the canonical case to the case of secant loci of very ample line bundles, remains no longer true when $C$ is $3$-gonal or $4$-gonal. For example; an easy calculation clarifies that the unique $g^{1}_{3}$ on a $3$-gonal curve, as well as a $g^{1}_{4}$ on a $4$-gonal curve, does not belong to $V^{2}_{3}(2K_{C})$, $V^{3}_{4}(2K_{C})$ respectively. Unfortunately, we don't have any knowledge about the shape of a general member of an element of $V^{d-1}_{d}(L)$ in the case that $C$ is $3$-gonal or $4$-gonal.

(c) Each class of curves appeared in theorem \ref{theorem1}, has a member admitting a very ample line bundle $L$ such that $\dim V^{r}_{d}(L)=d-r-2$ for some integers $r, d$ with $0<2r<d\leq h^0(L)+r-4$.

In fact for a $3$-gonal curve of genus $g$ with $d\leq g-2$, we have
$\dim C^2_{d}=\dim V^{d-2}_{d}(K)=d-4.$
See \cite[page 198]{ACGH}.
For a $4$-gonal curve we have
$\dim C^1_{d}=\dim V^{d-1}_{d}(K)=d-3$
with $4\leq d\leq g-2$. 

On a $5$-gonal curve with $p\in C$ as a base point of $K(-g^1_5)$, setting $L=K(-p)$, we observe that  for general points $q_1, q_2, \cdots, q_t$ on $C$;
divisors of type $D=g^1_5+q_1+q_2+\cdots +q_t$, lie on $ V^{t+3}_{t+5}(L)$. This implies that $\dim  V^{t+3}_{t+5}(L)=t+1$.

As in the previous case if $C$ is a $6$-gonal curve,
then
divisors of type $D=g^1_6+q_1+q_2+\cdots +q_t$, lie on $ V^{t+4}_{t+6}(L)$. This implies that 
$\dim  V^{t+4}_{t+6}(L)\geq t+1$. If $\dim  V^{t+4}_{t+6}(L)=t+1$, then $V^{t+4}_{t+6}(L(-q))$ would be $t+2$ dimensional for some $q\in C$.

\end{remark}

\section{An Improved Argument}
In this section we affirmatively answer problem \ref{problem}, for secant loci of very ample line bundles. At the same time, removing restrictions on the genus of $C$ imposed by M. Coppens in the spacial case $L=K_{C}$, we extend it considerably.

 Having treated the case $\gamma =0$ in \cite[Theorem 4.2]{A. B}, we will assume that 
$\gamma\geq 1$. 

\begin{lemma}\label{lemma3}
Assume that $H$ is a very ample sub-line bundle of the very ample line bundle $L$ such that $\dim V^{d-1}_{d}(L)= \dim V^{d-1}_{d}(H)$, for some integer $d$ with $d\leq h^{0}(L)-1$. If $V^{d-2}_{d-1}(L)$ is non empty and $\dim V^{d-2}_{d-1}(H)=\dim V^{d-2}_{d}(H)-1$, then
$\dim V^{d-2}_{d-1}(L)= \dim V^{d-2}_{d-1}(H).$
\end{lemma}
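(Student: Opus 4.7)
The plan is a two-sided dimension comparison between the secant loci for $H$ and for $L$. For the easy direction I invoke a rank-comparison principle: since the sub-line bundle inclusion gives $H^{0}(H)\subseteq H^{0}(L)$, the restriction of the evaluation-at-$D$ map from $H^{0}(L)$ to $H^{0}(H)$ has rank bounded by the rank on $H^{0}(L)$, so $h^{0}(H)-h^{0}(H(-D))\leq h^{0}(L)-h^{0}(L(-D))$. This yields the ambient inclusion $V^{k}_{d}(L)\subseteq V^{k}_{d}(H)$ for every $k,d$, and in particular $\dim V^{d-2}_{d-1}(L)\leq \dim V^{d-2}_{d-1}(H)$.

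For the reverse inequality I would study the addition morphism
\[\mu\colon V^{d-2}_{d-1}(H)\times C\longrightarrow C_{d},\qquad (D',p)\mapsto D'+p,\]
whose fibers have cardinality at most $d$. Splitting cases on whether $p\in \operatorname{Bs}|H(-D')|$ (equivalently whether $h^{0}(H(-D'-p))$ equals $h^{0}(H(-D'))$ or drops by one), the preimage decomposes as
\[\mu^{-1}(V^{d-2}_{d}(H))=\{(D',p):p\in \operatorname{Bs}|H(-D')|\}\cup\bigl(V^{d-3}_{d-1}(H)\times C\bigr).\]
For a general $D'$ in a top-dimensional component the base locus $\operatorname{Bs}|H(-D')|$ is finite, so the first stratum has dimension $\dim V^{d-2}_{d-1}(H)$; the alternative degenerate case $h^{0}(H)\leq d-2$ is ruled out because it would force $V^{d-1}_{d}(H)=C_{d}$ and, via the hypothesis $\dim V^{d-1}_{d}(L)=\dim V^{d-1}_{d}(H)$, also $V^{d-1}_{d}(L)=C_{d}$, contradicting $h^{0}(L)\geq d+1$ coming from $d\leq h^{0}(L)-1$. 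Combining the hypothesis $\dim V^{d-2}_{d}(H)=\dim V^{d-2}_{d-1}(H)+1$ with the finite-fiber count for $\mu$ then forces $\dim V^{d-3}_{d-1}(H)=\dim V^{d-2}_{d-1}(H)$, so some top-dimensional irreducible component $Y$ of $V^{d-2}_{d-1}(H)$ is contained in $V^{d-3}_{d-1}(H)$.

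Finally, I would lift this component back to $L$. For a general $D'\in Y$ one has $h^{0}(H(-D'))\geq h^{0}(H)-(d-3)$, and the inclusion $H^{0}(H(-D'))\subseteq H^{0}(L(-D'))$ gives $h^{0}(L(-D'))\geq h^{0}(H(-D'))$. In the intended regime $h^{0}(L)-h^{0}(H)=1$ (the relevant case $H=L(-p)$ arising in the applications), this yields $h^{0}(L(-D'))\geq h^{0}(L)-(d-2)$, so $Y\subseteq V^{d-2}_{d-1}(L)$, giving $\dim V^{d-2}_{d-1}(L)\geq \dim V^{d-2}_{d-1}(H)$ and completing the proof. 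The main obstacle is pinning down which stratum of $\mu^{-1}(V^{d-2}_{d}(H))$ actually supplies the expected dimension; the hypothesis $\dim V^{d-1}_{d}(L)=\dim V^{d-1}_{d}(H)$ together with $d\leq h^{0}(L)-1$ is exactly what rules out the ambiguous degenerate configuration.
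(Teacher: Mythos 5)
Your easy direction (the rank comparison giving $V^{k}_{d}(L)\subseteq V^{k}_{d}(H)$) and your analysis of the addition map $\mu$ are both sound: after ruling out $h^{0}(H)\leq d-2$ as you do, every $D'\in V^{d-2}_{d-1}(H)$ has $h^{0}(H(-D'))\geq 1$, the base-locus stratum is at most $\dim V^{d-2}_{d-1}(H)$-dimensional, and the hypothesis $\dim V^{d-2}_{d}(H)=\dim V^{d-2}_{d-1}(H)+1$ then forces $\dim V^{d-3}_{d-1}(H)=\dim V^{d-2}_{d-1}(H)$. The gap is in your last step, and you have flagged it yourself: passing from ``$D'$ imposes at most $d-3$ conditions on $|H|$'' to ``$D'$ imposes at most $d-2$ conditions on $|L|$'' needs $h^{0}(L)-h^{0}(H)\leq 1$, because the rank inequality only bounds the conditions imposed on $H$ from above by those imposed on $L$, never the reverse. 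The lemma assumes only that $H$ is a very ample sub-line bundle of $L$, and in the paper's application (the end of the proof of Theorem \ref{theorem2}) $H=\mathcal{H}$ is chosen of \emph{minimal} $h^{0}$ among very ample sub-line bundles with a certain property, so $h^{0}(L)-h^{0}(\mathcal{H})$ can be arbitrarily large; your argument therefore does not prove the statement as used.

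The missing idea is to use the hypothesis $\dim V^{d-1}_{d}(L)=\dim V^{d-1}_{d}(H)$ structurally rather than only numerically: combined with the inclusion $V^{d-1}_{d}(L)\subseteq V^{d-1}_{d}(H)$ it yields an irreducible component $X$ common to both loci. The paper removes a general point $q$ from the divisors of $X$: since no single point lies in every divisor of $X$, the slice $(q+C_{d-1})\cap X$ has codimension one in $X$, and for general $q$ the residual divisors $D$ with $q+D\in X$ drop one condition simultaneously for $L$ and for $H$, hence lie in $V^{d-2}_{d-1}(L)\cap V^{d-2}_{d-1}(H)$. This gives
\[
\dim V^{d-2}_{d-1}(L)\;\geq\;\dim X-1\;=\;\dim V^{d-1}_{d}(H)-1\;\geq\;\dim V^{d-2}_{d}(H)-1\;=\;\dim V^{d-2}_{d-1}(H),
\]
which together with your easy inclusion yields the claimed equality. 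The essential point your route misses is that the lower bound on $\dim V^{d-2}_{d-1}(L)$ has to be manufactured from divisors already known to impose few conditions on $L$ itself, not from divisors imposing few conditions on $H$.
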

\begin{proof}
 Assume that $X$ is an irreducible component of $ V^{d-1}_{d}(H)$ in common with $ V^{d-1}_{d}(L)$ such that 
 $\dim X=\dim V^{d-1}_{d}(H)$. If for some $p\in C$ one had $(p+C_{d-1})\cap X=X$, then 
 we obtain $\dim V^{d-2}_{d-1}(L)= \dim V^{d-1}_{d}(H)$, which is impossible.
 Therefore from the equality 
 $$X=\cup_{p\in C}[(p+C_{d-1})\cap X]$$ we conclude that for general $q\in C$, the closed subscheme $Y:=(q+C_{d-1})\cap X$ is of codimension $1$ in $X$. Consider now that, using genericity of $q$,  divisors  $D\in C_{d-1}$ such that $q+D\in X$, belong to  $V^{d-2}_{d-1}(L)$ and $V^{d-2}_{d-1}(H)$. This implies the assertion.\\

\end{proof}
\begin{theorem}\label{theorem2}
Let $C$ be a non-hyper elliptic curve of genus $g\geq 9$. Assume that for some very ample line bundle $L$ on $C$ and integers $d, \gamma$ with 
$d\geq 3, \gamma \geq 1$ such that
$h^0(L)\geq 2\gamma +4$,
 $\gamma+3 \leq d \leq h^0(L)-1-\gamma$; 
 one has $\dim V^{d-1}_{d}(L)=d-1-\gamma$. Then $V^{\gamma+2}_{\gamma+3}(L)$ has to be $2$-dimensional.
\end{theorem}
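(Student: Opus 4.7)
The plan is to argue by descending induction on $d$, reducing to the tautological base case $d=\gamma+3$ in which the hypothesis $\dim V^{d-1}_d(L)=d-1-\gamma$ is verbatim the desired conclusion $\dim V^{\gamma+2}_{\gamma+3}(L)=2$. The entire content is therefore in the inductive step: assuming $d\geq \gamma+4$ and $\dim V^{d-1}_d(L)=d-1-\gamma$, one must establish $\dim V^{d-2}_{d-1}(L)=d-2-\gamma$, after which the induction hypothesis applies since the numerical constraints $\gamma+3\leq d-1\leq h^{0}(L)-1-\gamma$ manifestly persist.

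For the upper bound $\dim V^{d-2}_{d-1}(L)\leq d-2-\gamma$ I would use the addition map
\[
\alpha\colon C\times V^{d-2}_{d-1}(L)\longrightarrow V^{d-1}_d(L),\qquad (p,D)\mapsto p+D,
\]
which is well defined because $h^0(L(-p-D))\geq h^0(L(-D))-1\geq h^0(L)-(d-1)$, and whose fibres over $D'\in V^{d-1}_d(L)$ are finite (identified with the pairs $(p,D'-p)$, $p\leq D'$); hence $1+\dim V^{d-2}_{d-1}(L)\leq \dim V^{d-1}_d(L)=d-1-\gamma$.

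For the harder lower bound I would fix an irreducible component $V\subseteq V^{d-1}_d(L)$ of maximal dimension and split into two cases. If no point of $C$ is a common base point of the divisors of $V$, then letting $D\in V$ and $q\leq D$ vary sweeps out a $(d-2-\gamma)$-dimensional family of residuals $D-q$; the genericity of the pair $(D,q)$ should force the jump $h^0(L(-D+q))=h^0(L(-D))+1$, placing the family inside $V^{d-2}_{d-1}(L)$. If instead $V$ does admit a fixed base point $p$, write $V=p+W$ with $W\subset C_{d-1}$ of dimension $d-1-\gamma$; I would then pass to the very ample sub-line bundle $H:=L(-p)$, use the identity $h^0(H)=h^0(L)-1$ together with a short comparison of sections to check that $W\subseteq V^{d-2}_{d-1}(H)$, and invoke Lemma \ref{lemma3} to transport the dimension estimate back to $V^{d-2}_{d-1}(L)$. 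The Aprodu--Sernesi bound from \cite{A-S2} enters precisely when verifying the compatibility condition $\dim V^{d-2}_{d-1}(H)=\dim V^{d-2}_d(H)-1$ required by the lemma, as well as the non-emptiness of $V^{d-2}_{d-1}(L)$.

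The main obstacle I foresee is exactly this fixed-base-point sub-case: one must verify the rigid numerical hypotheses of Lemma \ref{lemma3}, and, should $H$ itself inherit a component with a fresh common base point, iterate the extraction by replacing $L$ successively with $L(-p_1-\cdots-p_k)$ until either the resulting line bundle ceases to be very ample---in which case Theorem \ref{theorem1} together with the secant Martens--Mumford results of \cite{A. B} forces a contradiction with the standing assumption $g\geq 9$---or the numerical bounds $h^0(L)\geq 2\gamma+4$ and $d\leq h^{0}(L)-1-\gamma$ rule out the presence of a common base point altogether. Once this inductive step is secured, the descending induction terminates at $d=\gamma+3$ and the theorem follows.
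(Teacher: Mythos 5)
Your plan reduces everything to the inductive step ``$\dim V^{d-1}_{d}(L)=d-1-\gamma \Rightarrow \dim V^{d-2}_{d-1}(L)=d-2-\gamma$'', and the lower bound in that step is where the argument breaks. You assert that for a general $D$ in a maximal component $V$ and $q\leq D$, ``genericity should force the jump $h^0(L(-D+q))=h^0(L(-D))+1$.'' This is backwards. In terms of spans, $D\in V^{d-1}_{d}(L)$ means $\phi_L(D)$ spans at most a $\mathbf{P}^{d-2}$ in $\mathbf{P}(H^0(L))$, while $D-q\in V^{d-2}_{d-1}(L)$ requires the remaining $d-1$ points to span at most a $\mathbf{P}^{d-3}$. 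If the $d$ points of $D$ are in linearly general position inside their $\mathbf{P}^{d-2}$ --- which is exactly the generic expectation --- then every degree-$(d-1)$ subdivisor still spans the full $\mathbf{P}^{d-2}$, every section of $L(-D+q)$ vanishes at $q$, and \emph{no} residual $D-q$ lies in $V^{d-2}_{d-1}(L)$. This is precisely why the paper only ever uses the ``free'' operation $D\mapsto D-q\in V^{k}_{d-1}(L)$ (same secant index, lower degree) and never claims the index drops; forcing the index to drop is essentially the whole difficulty of the theorem, so your induction on $d$ begs the question. Two smaller problems: in the fixed-base-point subcase, Lemma \ref{lemma3} only converts $\dim V^{d-2}_{d-1}(H)$ into $\dim V^{d-2}_{d-1}(L)$, so you still need the unproved lower bound for $H$; and the proposed ``contradiction with $g\geq 9$'' via Theorem \ref{theorem1} cannot exist, since Theorem \ref{theorem2} is asserted for all non-hyperelliptic curves of genus $\geq 9$, including the trigonal, tetragonal and bi-elliptic ones that Theorem \ref{theorem1} produces.

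For comparison, the paper inducts on $\gamma$ rather than on $d$. It first replaces $L$ by a very ample sub-line bundle $\mathcal{H}$ minimizing $h^0$ subject to $\dim V^{d-1}_{d}(\mathcal{H})=d-1-\gamma$, then passes to $\mathcal{H}(-p)$ for general $p$; the key point (the ``claim'' argument of Theorem \ref{theorem1}) is that the dimension of $V^{d-1}_{d}$ \emph{increases} by exactly one under this twist, so $\dim V^{d-1}_{d}(\mathcal{H}(-p))=d-1-(\gamma-1)$ and the induction hypothesis at $\gamma-1$ yields $\dim V^{\gamma+1}_{\gamma+2}(\mathcal{H}(-p))=2$. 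One then climbs back up using the Aprodu--Sernesi excess-dimension bounds and Lemmas \ref{lemma2} and \ref{lemma3}. In other words, the paper changes the line bundle to shift the defect parameter $\gamma$, instead of trying to descend in the degree $d$ for a fixed line bundle; your descent would need an independent proof of the index-dropping step, which is not supplied.
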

\begin{proof} 
We use induction on $\gamma$. 
Assume $ \gamma=1$. 
If $L(-p)$ fails to be very ample, then $\dim V^{2}_{3}(L)=1$. The equality $\dim V^{3}_{4}(L)=3$
  leads to $\dim V^{d-1}_{d}(L)\geq d-1$, which is absurd. Therefore $\dim V^{3}_{4}(L)=2$ and we get the result.
 Assume $d\geq 4$ and 
 let $\mathcal{H}$ be a very ample sub-line bundle of $L$ with minimum $h^0(\mathcal{H})$ among those very ample sub-line bundles $\Gamma $ of $L$ such that $\dim V^{d-1}_{d}(\Gamma)=d-2$.
 
For general $p\in C$ if the line bundle $\mathcal{H}(-p)$ fails to be very ample, then as in the previous case we obtain 
$\dim V^{3}_{4}(\mathcal{H})=2$. Observing $\dim V^{d-1}_{d}(\mathcal{H})=d-2$ we obtain $\dim V^{\beta +2}_{\beta +3}(\mathcal{H})=\beta+1$  for $0\leq \beta \leq d-3 $.
Lemma \ref{lemma3} applied to $\mathcal{H}$ and $L$ gives $\dim V^{3}_{4}(L)=2$.

 If for general $p\in C$, the line bundle $\mathcal{H}(-p)$ turns to be very ample, then
  $h^{0}(\mathcal{H})=d+2$ and $\dim V^{d-1}_{d}(\mathcal{H})=\dim V^{d-1}_{d}(\mathcal{H}(-p))$. But an argument as in the proof of our claim in theorem \ref{theorem1}, excludes this possibility. This completes the case $\gamma =1$.

Assume $\gamma \geq 2$. We prove that there exists a very ample sub-line bundle $\Gamma$ of $L$ such that $\dim V^{\gamma +2}_{\gamma + 3}(\Gamma)=2$.
 
Let $\mathcal{H}$ be choosen as in the case $\gamma=1$ with  $\dim V^{d-1}_{d}(\mathcal{H})=d-1-\gamma$.
If for general $p\in C$ the line bundle $\mathcal{H}(-p)$ fails to be very ample, then we obtain
$\dim V^{2}_{3}(\mathcal{H})=1$. Implying  
  $$\dim V^{d-1}_{d}(\mathcal{H})\geq d-2>d-1-\gamma=\dim V^{d-1}_{d}(\mathcal{H}),$$
  this case would be absurd. Meanwhile; the possibility 
 $h^{0}(\mathcal{H})=d+1+\gamma$
with failing very ampleness of $\mathcal{H}(-p)$ for general $p\in C$, is excluded similarly.

  Assume that for general $p\in C$ the line bundle $\mathcal{H}(-p)$ is very ample with 
 $\dim V^{d-1}_{d}(\mathcal{H})=d-1-\gamma$ and $d=h^0(\mathcal{H})-1-\gamma \geq 4$. We distinguish two main cases.
 
 Assuming
 $\dim V^{d-2}_{d-1}(\mathcal{H}(-p))=d-1-\gamma$ 
 as the first main case,
 the induction hypothesis implies that
 $\dim V^{\gamma +1}_{\gamma +2}(\mathcal{H}(-p))=2$.
 Applying \cite[Theorem 4.1]{A-S2},
$V^{\gamma +2}_{\gamma +3}(\mathcal{H}(-p))$ would be of dimension $3$ or $4$. 
The latter case implies $\dim V^{d-1}_{d}(\mathcal{H}(-p))\geq 4+(d-\gamma-3)=d-\gamma+1$ which using $\dim V^{d-1}_{d}(\mathcal{H})=d-1-\gamma$ contradicts lemma \ref{lemma2}. Therefore $\dim V^{\gamma +2}_{\gamma +3}(\mathcal{H}(-p))=3$.
Using lemma
\ref{lemma2}, $V^{\gamma +2}_{\gamma+3}(\mathcal{H})$ would be $2$ or $3$ dimensional. Three dimensionality of $ V^{\gamma +2}_{\gamma+3}(\mathcal{H})$ implies that $\dim V^{d-1}_{d}(\mathcal{H})\geq d-\gamma$, which is absurd.
Therefore $\dim V^{\gamma +2}_{\gamma + 3}(\mathcal{H})=2$.

As the second main case we assume 
 $\dim V^{d-2}_{d-1}(\mathcal{H}(-p))\neq d-1-\gamma$ and we
 claim;
 $$\dim V^{d-1}_{d}(\mathcal{H}(-p))\geq d-\gamma.$$
 
 The claim can be proved as in the proof of our claim in theorem \ref{theorem1}, so we omit its proof.
 We use Lemma \ref{lemma2} and obtain 
$\dim V^{d-1}_{d}(\mathcal{H}(-p))=d-1-(\gamma -1).$
This again by induction hypothesis asserts;
$\dim V^{\gamma +1}_{\gamma +2}(\mathcal{H}(-p))=2$, by which
 as in the first main case we obtain $\dim V^{\gamma +2}_{\gamma + 3}(\mathcal{H})=2$.
To end the proof, we apply Lemma \ref{lemma3} and obtain: 
$$\dim V^{\gamma +2}_{\gamma + 3}(L)=\dim V^{\gamma +2}_{\gamma + 3}(\mathcal{H})=2.$$
 \end{proof}
 Through theorem \ref{theorem3}, we give an application to our results, specifically theorem \ref{theorem2} and lemma \ref{lemma2}.
 \begin{theorem}\label{theorem3}
 Assume that $C$ is a non-hyper elliptic curve of genus $g\geq 9$ and $L$ is a very ample line bundle on $C$ such that $V^{h^0(L)-d}_{h^0(L)-d+1}(L)=\emptyset$ for some integer $d$ with $[\frac{h^0(L)+3}{2}]\leq d\leq h^0(L)-1$.
 Then 
$ \dim V^{d-1}_{d}(L)=2d-h^0(L)-1$.

 In particular; if $C$ is a general curve of genus $g$, then for integers $d\in \lbrace  [\frac{g+3}{2}], \cdots , g-1\rbrace$, the varieties $C^1_d$ and $W^1_d$ are of expected dimensions $2d-g-1$, $2d-g-2$, respectively.
\end{theorem}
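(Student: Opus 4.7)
Set $n := h^0(L)$ and $\gamma := n - d$. The hypothesis $d \geq \lceil (n+3)/2 \rceil$ rearranges to $d \geq \gamma + 3$, and $d \leq n - 1$ gives $\gamma \geq 1$. The vanishing reads $V^\gamma_{\gamma+1}(L) = \emptyset$ and the goal is $\dim V^{d-1}_d(L) = d - 1 - \gamma$. My plan is to establish the upper and lower bounds separately. For the lower bound, I would invoke the Fulton--Harris--Lazarsfeld type excess-dimension estimate for secant loci in \cite[Theorem 4.1]{A-S2}: since the expected dimension of $V^{d-1}_d(L)$ is $2d - n - 1 = d - 1 - \gamma \geq 2$, the Aprodu--Sernesi theorem would supply both non-emptiness and the component-wise inequality $\dim V^{d-1}_d(L) \geq d - 1 - \gamma$.

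The upper bound $\dim V^{d-1}_d(L) \leq d - 1 - \gamma$ would be proved by contradiction. Assume $\dim V^{d-1}_d(L) \geq d - \gamma$; the Martens-type bound \cite[Theorem 4.2]{A. B} forces $\dim V^{d-1}_d(L) \leq d - 2$, so there must exist an integer $\gamma'$ with $1 \leq \gamma' \leq \gamma - 1$ for which $\dim V^{d-1}_d(L) = d - 1 - \gamma'$. Using $d \geq \gamma + 3$ and $\gamma' \leq \gamma - 1$, I would check that the numerical hypotheses of Theorem~\ref{theorem2} hold for this $\gamma'$: $h^0(L) = d + \gamma \geq 2\gamma + 3 \geq 2\gamma' + 4$, $\gamma' + 3 \leq d$, and $d \leq h^0(L) - 1 - \gamma'$. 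Theorem~\ref{theorem2} would then give $\dim V^{\gamma'+2}_{\gamma'+3}(L) = 2$. Iterating the inclusion $V^k_d(L) + C \subseteq V^{k+1}_{d+1}(L)$ exactly $j := \gamma - \gamma' - 1 \geq 0$ times raises the dimension by $j$, so $\dim V^{\gamma+1}_{\gamma+2}(L) \geq 2 + j = \gamma - \gamma' + 1 \geq 2$. A final application of the diagonal-shift form $\dim V^{k+1}_{d+1}(L) \leq \dim V^k_d(L) + 2$ of \cite[Theorem 4.1]{A-S2}---the very inequality invoked inside the proof of Theorem~\ref{theorem2}---would then give $\dim V^\gamma_{\gamma+1}(L) \geq \dim V^{\gamma+1}_{\gamma+2}(L) - 2 \geq 0$, contradicting the assumed emptiness of $V^\gamma_{\gamma+1}(L)$.

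For the corollary on a general curve with $L = K_C$, Riemann--Roch identifies $V^{g-d}_{g-d+1}(K_C)$ with $C^1_{g-d+1}$, which is empty by Brill--Noether since $\rho(g, 1, g - d + 1) = g - 2d < 0$ for $d \geq \lceil (g+3)/2 \rceil$; the theorem then yields $\dim C^1_d = 2d - g - 1$, and quotienting by the $1$-dimensional $g^1_d$ fibers gives $\dim W^1_d = 2d - g - 2$---the expected Brill--Noether dimensions. The principal obstacle will be extracting the two distinct forms of the Aprodu--Sernesi excess-dimension theorem \cite[Theorem 4.1]{A-S2}: a component lower bound together with non-emptiness for $V^{d-1}_d(L)$, and the diagonal-shift upper bound needed to close the contradiction. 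A secondary technical care is verifying uniformly that Theorem~\ref{theorem2}'s numerical hypotheses continue to hold across every admissible auxiliary integer $\gamma'$.
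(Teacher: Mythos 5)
Your upper-bound argument is a genuinely different route from the paper's and appears to work. The paper proves the whole theorem by induction on $k=h^0(L)-d$: it passes to $L(-p)$ for a general point $p$ (very ample because $V^{2}_{3}(L)=\emptyset$), gets $\dim V^{d-1}_{d}(L(-p))=2d-h^0(L)$ from the induction hypothesis, and then sandwiches $\dim V^{d-1}_{d}(L)$ using Lemma \ref{lemma2} together with the ``a general member of a component of $V^{d-1}_{d}(L)$ is not a general member of a top-dimensional component of $V^{d-1}_{d}(L(-p))$'' argument recycled from the Claim in Theorem \ref{theorem1}. Your contradiction via Theorem \ref{theorem2} --- producing $\dim V^{\gamma'+2}_{\gamma'+3}(L)=2$, pushing it up to $\dim V^{\gamma+1}_{\gamma+2}(L)\geq 2$ by adding general points, and descending one diagonal step to contradict $V^{\gamma}_{\gamma+1}(L)=\emptyset$ --- avoids the induction entirely and is arguably cleaner for the upper bound; your verification of the numerical hypotheses of Theorem \ref{theorem2} for each admissible $\gamma'$ checks out, and the case $\gamma=1$, where no $\gamma'$ exists, is already killed by the Martens-type bound alone.

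The gap is in your lower bound. Non-emptiness of $V^{d-1}_{d}(L)$ is \emph{not} a consequence of the expected dimension $2d-h^0(L)-1$ being non-negative: the Aprodu--Sernesi result \cite{A-S2} is an excess-dimension statement (every non-empty component has at least the expected dimension) plus relative diagonal-shift inequalities; it contains no absolute existence statement, and existence of secant divisors in the expected range is precisely the delicate question studied by Farkas \cite{F} (cf.\ Remark \ref{remark2}(c)) --- indeed the theorem's own hypothesis is that a nearby secant locus \emph{is} empty despite being one. To repair this you must seed the descent at a locus you can exhibit as non-empty, e.g.\ $V^{h^0(L)-2}_{h^0(L)-1}(L)$, which is non-empty of dimension $h^0(L)-3$ by \cite[Lemma 2.1]{A-S} (the paper's base case $k=1$), and then apply the diagonal inequality of \cite{A-S2} repeatedly: the hypothesis $[\frac{h^0(L)+3}{2}]\leq d$ keeps every intermediate dimension large enough for non-emptiness to propagate down to $V^{d-1}_{d}(L)$ with $\dim V^{d-1}_{d}(L)\geq 2d-h^0(L)-1$. (Alternatively, run the paper's induction; the second sentence of Lemma \ref{lemma2} is exactly the needed non-emptiness transfer.) A minor further point: reading $[\cdot]$ as the integer part, the hypothesis only gives $2d\geq h^0(L)+2$ when $h^0(L)$ is even, so the expected dimension can be $1$ rather than $\geq 2$; this does not affect your upper bound but must be tracked when you decide at which step of the descent non-emptiness is still guaranteed.
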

\begin{proof}
We set $d=h^0(L)-k$ and use induction on $k$. Consider that the case $k=1$ is immediate by lemma \cite[Lemma 2.1]{A-S}. 
By our emptiness assumption we obtain that $V^{2}_{3}(L)=\emptyset$. Therefore 
for general $p\in C$, the line bundle $L(-p)$ is very ample. In addition,
$V^{h^0(L)-d-1}_{h^0(L)-d}(L(-p))$ contained in
$V^{h^0(L)-d}_{h^0(L)-d+1}(L)$ would be empty.
Using the induction hypothesis we obtain $\dim V^{d-1}_{d}(L(-p))=2d-h^0(L)$. On use of emptiness of $V^{h^0(L)-d}_{h^0(L)-d+1}(L)$, 
 we find that if $X$ is an irreducible component of $V^{d-1}_{d}(L(-p))$ such that $\dim X=\dim V^{d-1}_{d}(L(-p))$, then a general member of an irreducible component, $V$, of $V^{d-1}_{d}(L)$ fails to be a general member of $X$. This, by Lemma \ref{lemma2} would give the result.

A general curve of genus $g$ has gonality equal to $[\frac{g+3}{2}]$. Threfore the emptiness assumption is immediate for $L=K_{C}$ on general curves. 

\end{proof}
\begin{remark}\label{remark2}
(a) Although in the case $\gamma =1$ of theorem \ref{theorem2} using \cite[Theorem 4.6]{A. B} we find $C$ an exceptional curve 
 but;
 based on remark \ref{remark1}(b), this is useless to conclude theorem \ref{theorem2}  when $\gamma =1$. So we were forced to make extensions in proof of theorem \ref{theorem1} to obtain the result directly in the case  $\gamma =1$.

(b) Notice that theorem \ref{theorem1} is not a consequence of theorem \ref{theorem2} for specific values of $\gamma$, e.g. for $\gamma=1$
 or $\gamma=2$.  In fact; in the case $\gamma =1$, based on theorem \ref{theorem2}, the equality $\dim V^{d-1}_{d}(L)=d-2$ leads to $\dim V^{3}_{4}(L)=2$. This by \cite[Theorem 4.1]{A-S2} implies that either $\dim V^{2}_{3}(L)=0$ or $\dim V^{2}_{3}(L)=1$. Although lemma \ref{lemma1} implies exceptionality of $C$ in the latter case, but  observing the uncontrollable nature of divisors in $V^{d-r}_{d}(L)$, one can not conclude $3$-gonality, $4$-gonality or bi-ellipticity of $C$ in the first case. The case $\gamma=2$ is obviously more complicated.
  
(c) G. Farkas gives in \cite{F}, numerical conditions that they ensure 
emptiness of $V^{h^0(L)-d}_{h^0(L)-d+1}(L)$ for some inregers $d$ and various line bundles on general curves. Therefore theorem \ref{theorem3} would be applicable for line bundles and integers having these conditions. 
Meanwhile there are cases that theorem \ref{theorem3} can be applied without using Farkas' results. For example for
very ample line bundles on 
non-exceptional curves of genus $g\geq 9$ we find $\dim V^{h^0(L)-3}_{h^0(L)-2}(L)=d-3$. Additionally for
very ample line bundles on general curves of genus $g\geq 11$, according to the gonality of general curves and using proposition \ref{proposition} we find $\dim V^{h^0(L)-4}_{h^0(L)-3}(L)=d-4$.
\end{remark}

\end{document}